\def\dom{\mathrm{dom}}
\begin{document}

% \titlehead{aaa}
\title{On the Lebesgue Property of Monotone Convex Functions}
%\titlehead{CARF-F-214}
%\dedication{Dedicated to bbb}
%\subject{aaa}
% \subtitle{Non-Dominated Case}%
\runtitle{Lebesgue Property of Monotone Convex Functions}
% \subject{bbbb}
%\subtitle{Author's Response and Correction Requests}
\author{Keita Owari}%
\runauthor{K. Owari}

%\myThanks[1]{To appear in \emph{Math. Financ. Econ.}, DOI : 10.1007/s11579-013-0111-z
  % The author gratefully acknowledges the financial support from the
  % Center for Advanced Research in Finance (CARF) at the
  % Graduate School of Economics of the University of Tokyo
%}
 
\address{Graduate School of Economics, The University of Tokyo\newline
  7-3-1 Hongo, Bunkyo-ku, Tokyo 113-0033, Japan}

\email{owari@e.u-tokyo.ac.jp}
% \ArXiV{112233} 
\keyAMS{46E30, 47H07, 46N10, 91G80, 91B30 }%
\keyJEL{C02, C60}%

\keyWords{monotone convex functions, Lebesgue property,
  order-continuity, perturbed James's theorem, convex risk measures }

% \date{\today}
% %%% History %%%
\FrstVer{4 Jun. 2013, Accepted: 16 Nov. 2013}% \Accepted{16 November 2013}

\ToAppear{Math. Financ. Econ.}
\DOI{10.1007/s11579-013-0111-z}
%\CurrVer{\today}

%%%% ABSTRACT %%%%%

\abstract{%
  The Lebesgue property (order-continuity) of a monotone convex
  function on a solid vector space of measurable functions is
  characterized in terms of (1) the weak inf-compactness of the
  conjugate function on the order-continuous dual space, (2) the
  attainment of the supremum in the dual representation by
  order-continuous linear functionals. This generalizes and unifies
  several recent results obtained in the context of convex risk
  measures.
}

\maketitle

\section{Introduction and the Statement of the Result}
\label{sec:MonConvSolid}

Many problems in mathematical finance and economics involve some
monotone convex functions of measurable functions, and their
regularity with respect to the natural order structure often plays a
key role. In this short note, we characterize the \emph{Lebesgue
  property} (order-continuity) of such functions on solid spaces of
measurable functions, in terms of the conjugate defined on the
order-continuous dual space, which unifies the recent studies in the
context of convex risk measures (\citep{MR2011534,MR2648597},
\citep{jouini06:_law_fatou}, \citep{orihuela_ruiz12:_lebes_orlic},
\citep{kratschmer07}).

We use the probabilistic notation.  Throughout the paper,
$(\Omega,\FC,\PB)$ denotes a fixed probability space.
$L^0:=L^0(\Omega,\FC,\PB)$ stands for the space of (equivalence
classes modulo $\PB$-almost sure (a.s.) equality of) \emph{finite}
measurable functions, and we write simply $L^p:=L^p(\Omega,\FC,\PB)$
for the measure $\PB$, while $L^p(\QB):=L^p(\Omega,\FC,\QB)$ for other
measures. With the a.s. pointwise order, $L^0$ is an order-complete
Riesz space with the \emph{countable-sup property} (see
\citep[][Ch.~8]{aliprantis_border06} for these terminologies). We fix
a \emph{solid} vector subspace (ideal) $\Xs$ of $L^0$, that is, a
vector subspace of $L^0$ such that $|X|\leq |Y|$ (a.s.) and $Y\in \Xs$
imply $X\in \Xs$. Then $\Xs$ is an order-complete Riesz space with the
countable-sup property on its own right.  All Orlicz spaces and their
Morse subspaces including $L^p$'s are solid in this sense.  We suppose
that $\Xs$ contains the constants, then $L^\infty\subset \Xs$ by the
solidness.

We work with the pairing $\langle \Xs,\Xs^\sim_n\rangle$ where
$\Xs^\sim_n$ is another solid space given by
\begin{equation}
  \label{eq:OderContiDual}
  \Xs^\sim_n=\{Z\in L^0:\, XZ\in L^1,\,\forall X\in \Xs\},
\end{equation}
% and consider the pair $\langle \Xs,\Xs^\sim_n\rangle$
with the
bilinear form $\langle X,Z\rangle=\EB[XZ]:=\int_\Omega XZd\PB$. In the
terminology of Riesz spaces, $\Xs^\sim_n$ is the
\emph{order-continuous dual} of $\Xs$ (with the identification of
$Z\in \Xs^\sim_n$ and the order-continuous linear functional $X\mapsto
\EB[XZ]$ on $\Xs$, see \citep[Section~112]{MR704021}).  Note that
$\Xs$ separates $\Xs^\sim_n$ as long as $\Xs$ contains the constants
as assumed so and then $\Xs^\sim_n\subset L^1$, while
$\Xs^\sim_n=\{0\}$ is possible in general.  $\Xs^\sim_n$ separates
$\Xs$ if and only if $\Xs\subset L^1(\QB)$ for a probability measure
$\QB$ \emph{equivalent to} $\PB$ ($\QB\sim\PB$). The pair $\langle
\Xs,\Xs^\sim_n\rangle$ is then in separating duality, thus the weak
topology $\sigma(\Xs,\Xs^\sim_n)$ is a locally convex Hausdorff
topology.

By a \emph{monotone} convex function on a solid space $\Xs\subset
L^0$, we mean a proper convex function $\varphi:\Xs\rightarrow
(-\infty,\infty]$ such that $\varphi(X)\leq \varphi(Y)$ whenever
$X\leq Y$ (a.s.), and let
\begin{equation}
  \label{eq:ConjPsiUsual}
  \varphi^*(Z):=\sup_{X\in \Xs}(\EB[XZ]-\varphi(X)),\,Z\in \Xs^\sim_n.
\end{equation}

The aim of this note is to prove the following
(cf. \citep{MR2011534,MR2648597}, \citep{jouini06:_law_fatou},
\citep{orihuela_ruiz12:_lebes_orlic}, \citep{kratschmer07}).

\begin{theorem}
  \label{thm:JSTGeneral2}
  Let $\Xs\subset L^0$ be a solid space containing the constants and
  contained in $ L^1(\QB)$ for some probability $\QB\sim\PB$, and
  $\varphi:\Xs\rightarrow \RB$ a finite-valued monotone convex
  function which is $\sigma(\Xs,\Xs^\sim_n)$-lower semicontinuous or
  equivalently
  \begin{equation}
    \label{eq:FatouEquality1}
    \varphi(X)=\sup_{Z\in\Xs^\sim_n}(\EB[XZ]-\varphi^*(Z)),\,\forall X\in \Xs.
  \end{equation}
  Then the following are equivalent:
  \begin{description}[(3)]
  \item[(1)] $\varphi$ has the Lebesgue property on $\Xs$, that is,
    \begin{equation}
      \label{eq:LebX2}
      \exists Y\in \Xs\text{ such that }
      |X_n|\leq |Y|,\,\forall n\text{ and }X_n\rightarrow X\text{ a.s. } 
      \Rightarrow\,\varphi(X)=\lim_n\varphi(X_n);
    \end{equation}
  \item[(2)] $\varphi^*$ is $\sigma(\Xs^\sim_n,\Xs)$-inf-compact,
that is, $\{Z\in\Xs^\sim_n:\,\varphi^*(Z)\leq c\}$ is
    $\sigma(\Xs^\sim_n,\Xs)$-compact for each $c>0$;
  \item[(3)] $\sup_{Z\in\Xs^\sim_n}(\EB[XZ]-\varphi^*(Z))$ is attained
    for all $X\in\Xs$ and in particular,
    \begin{equation}
  \label{eq:FatouFrechet}
  \varphi(X)
  =\max_{Z\in\Xs^\sim_n}(\EB[XZ]-\varphi^*(Z)),\,\forall X\in\Xs.
\end{equation}

\end{description}

\end{theorem}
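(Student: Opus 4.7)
The plan is to establish the equivalences through the chain $(2)\Rightarrow(3)$, $(2)\Rightarrow(1)$, $(1)\Rightarrow(3)$, and $(3)\Rightarrow(2)$; the last implication is the genuinely hard one and will rest on a perturbed James's theorem, while the first three are comparatively direct. A useful preliminary observation is that monotonicity of $\varphi$ forces $\varphi^*(Z)=+\infty$ whenever $Z$ is not a.s.\ nonnegative, so every effective maximizer lies in the positive cone $(\Xs^\sim_n)_+$.

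For $(2)\Rightarrow(3)$, fix $X\in\Xs$. The map $Z\mapsto\EB[XZ]-\varphi^*(Z)$ is $\sigma(\Xs^\sim_n,\Xs)$-upper semicontinuous, and the Young--Fenchel inequality applied to $tX$ with $t>1$, namely $\EB[XZ]\le(\varphi^*(Z)+\varphi(tX))/t$, shows every super-level set of this map is contained in a sublevel set of $\varphi^*$, hence compact by (2); Weierstrass then yields attainment. For $(2)\Rightarrow(1)$, let $X_n\to X$ a.s.\ with $|X_n|\le|Y|\in\Xs$. The liminf inequality follows from \eqref{eq:FatouEquality1} and dominated convergence on each fixed $Z\in\Xs^\sim_n$ (valid since $|YZ|\in L^1$). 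For the limsup, take maximizers $Z_n\in(\Xs^\sim_n)_+$ supplied by $(2)\Rightarrow(3)$, bound $\varphi^*(Z_n)$ uniformly by Young comparison with $\pm 2|Y|$, extract a $\sigma(\Xs^\sim_n,\Xs)$-cluster point $Z^*$ from the resulting compact level set, and split $\EB[X_nZ_n]=\EB[XZ_n]+\EB[(X_n-X)Z_n]$: the first piece converges by weak convergence, the second vanishes by the uniform integrability implicit in the compactness (via $\sigma(\Xs^\sim_n,\Xs)\supset\sigma(L^1,L^\infty)$ on bounded sets), combined with $|X_n-X|\to 0$ a.s.\ and $|X_n-X|\le 2|Y|$.

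For $(1)\Rightarrow(3)$, fix $X\in\Xs$ and pick a maximizing sequence $(Z_n)\subset(\Xs^\sim_n)_+$. Since $1\in\Xs$, the Young bound $\EB[Z_n]\le\varphi^*(Z_n)+\varphi(1)$ together with the same $t>1$ trick bounds $\{Z_n\}$ in $L^1(\PB)$. I would then apply Koml\'os's theorem to pass to forward convex combinations $\bar Z_k\to Z^*$ a.s., and use the Lebesgue property of $\varphi$ together with the convexity and lower semicontinuity of $\varphi^*$ to conclude $\varphi(X)=\EB[XZ^*]-\varphi^*(Z^*)$.

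The main obstacle is $(3)\Rightarrow(2)$, for which my plan is to invoke a perturbed James's theorem in the spirit of \citep{orihuela_ruiz12:_lebes_orlic}: a weakly lower semicontinuous convex function on a suitable Banach space whose penalised supremum $\sup(\langle x,\cdot\rangle-\varphi^*)$ is attained for every continuous $x$ must have weakly compact sublevel sets. The subtle preparatory step is installing an adequate Banach-space framework: the hypothesis $\Xs\subset L^1(\QB)$ for some $\QB\sim\PB$ guarantees $d\QB/d\PB\in\Xs^\sim_n$, which one uses either to norm the level sets of $\varphi^*$ inside an appropriate Orlicz or $L^1$-type space, or to reduce the compactness question to weak compactness inside the $L^1(\QB)$--$L^\infty(\QB)$ duality. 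Carrying out this reduction and verifying that the resulting weak compactness transfers back to $\sigma(\Xs^\sim_n,\Xs)$ is where the bulk of the technical effort will lie.
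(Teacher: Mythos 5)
Your outline identifies the right tools, but it contains a recurring gap that affects both (2)$\Rightarrow$(1) and your plan for (3)$\Rightarrow$(2): you conflate $\sigma(L^1,L^\infty)$-compactness (equivalently, uniform integrability of the level set $\{Z:\varphi^*(Z)\le c\}$ itself) with $\sigma(\Xs^\sim_n,\Xs)$-compactness. The latter is strictly stronger when $\Xs\supsetneq L^\infty$: it is equivalent to uniform integrability of $\{XZ:\varphi^*(Z)\le c\}$ for \emph{every} $X\in\Xs$, not just $X=1$ (this is the content of the paper's Lemma~\ref{lem:Compact1}, proved via the Aliprantis--Burkinshaw characterization of weakly compact subsets of the order-continuous dual). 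In your (2)$\Rightarrow$(1) argument, the term $\EB[(X_n-X)Z_n]$ is dominated by $\EB[2|Y|\,Z_n]$ with $Y\in\Xs$ possibly unbounded, so uniform integrability of $\{Z_n\}$ alone does not make it vanish; you need $\sup_{Z\in\Lambda}\EB[|X_n-X|\,Z]\to 0$, i.e.\ the order-continuity of $X\mapsto\sup_Z\EB[|XZ|]$ on the compact level set, which is exactly what Lemma~\ref{lem:Compact1} supplies. The same issue undoes your plan for (3)$\Rightarrow$(2): applying a James-type theorem in the $L^1$--$L^\infty$ duality to $\varphi^*$ directly would at best give $\sigma(L^1,L^\infty)$-relative compactness of the level sets, and this does \emph{not} ``transfer back'' to $\sigma(\Xs^\sim_n,\Xs)$-compactness. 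The missing idea is the paper's change-of-variable trick: for each fixed $X\in\Xs$ one defines $g_X(Z):=\varphi^*\bigl(Z/(1+|X|)\bigr)$ on $L^1$, checks that $g_X$ is coercive (using finiteness of $\varphi$ on all of $\Xs$), and verifies that $\sup_{Z\in L^1}(\EB[YZ]-g_X(Z))$ is attained for every $Y\in L^\infty$ because $Y(1+|X|)\in\Xs$ by solidity and (3) applies to it. The coercive James theorem then yields uniform integrability of $\{(1+|X|)Z:\varphi^*(Z)\le c\}$ for each $X$, which is precisely criterion (4) of Lemma~\ref{lem:Compact1}.

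Two smaller points. Your (1)$\Rightarrow$(3) via Koml\'os is underspecified at exactly the places where the Lebesgue property must enter: a.s.\ convergence of the forward convex combinations $\bar Z_k$ does not by itself give $\EB[X\bar Z_k]\to\EB[XZ^*]$ (Fatou only works one way on $X^-$) nor lower semicontinuity of $\varphi^*$ along a.s.\ convergence; to repair it you end up proving uniform integrability of $\{X\bar Z_k\}$ from the Lebesgue property, i.e.\ re-deriving (1)$\Rightarrow$(2). The paper's route --- (1)$\Rightarrow$(2) via the estimate $\sup_{\varphi^*(Z)\le c}\EB[X_nZ]\le\lambda^{-1}\varphi(\lambda X_n)+c/\lambda$ and a diagonal argument, followed by (2)$\Rightarrow$(3) by Weierstrass --- is cleaner and avoids Koml\'os entirely. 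Your (2)$\Rightarrow$(3) and the level-set bound via Fenchel's inequality applied to $tX$ are fine and match the paper's Lemma~\ref{lem:Estim1}.
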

\begin{remark}[Order-continuity]
  By the countable-sup property of $\Xs$ as an ideal of $L^0$, the
  Lebesgue property (\ref{eq:LebX2}) is equivalent to the generally
  stronger \emph{order-continuity}:
  $\varphi(X)=\linebreak\lim_\alpha\varphi(X_\alpha)$ if a \emph{net}
  $(X_\alpha)_\alpha\subset\Xs$ converges in order to $X$, i.e., if
  there is a decreasing net (with the same index set)
  $(\xi_\alpha)_\alpha$ with $|X-X_\alpha|\leq \xi_\alpha\downarrow 0$
  in $\Xs$.
\end{remark}
A proof will be given in Section~\ref{sec:ProofMain}. Here we collect
some remarks and consequences. We first emphasize that all Orlicz
spaces as well as their Morse subspaces including $L^p$ with $1\leq
p\leq \infty$ are covered by Theorem~\ref{thm:JSTGeneral2}. Also, any
solid space $\Xs\subset L^0$ which admits a \emph{finite} monotone
convex function is contained in $L^1(\QB)$ with some $\QB\ll \PB$,
thus only the equivalence $\QB\sim\PB$ does really matter in the
assumption regarding $\QB$.

We can relate the Lebesgue property to some other common regularity
properties:
\begin{corollary}
  \label{cor:KnownEquivalences}
  In the situation of Theorem~\ref{thm:JSTGeneral2}, the equivalent
  conditions (1) -- (3) are further equivalent to any of the
  following:
  \begin{enumerate}\setcounter{enumi}{3}
  \item $\varphi$ is
    $\sigma(\Xs,\Xs^\sim_n)$-subdifferentiable, i.e.,
    \begin{equation}
      \label{eq:subdiff2}
      \forall X\in\Xs,\,\exists Z\in \Xs^\sim_n\text{ such that }
      \EB[XZ]-\varphi(X)\geq\EB[YZ]-\varphi(Y),\,\forall Y\in\Xs;
    \end{equation}
  \item $\varphi$ is continuous for the Mackey topology
    $\tau(\Xs,\Xs^\sim_n)$.

  \end{enumerate}

\end{corollary}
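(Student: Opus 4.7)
The plan is to extend the chain (1)$\Leftrightarrow$(2)$\Leftrightarrow$(3) of Theorem~\ref{thm:JSTGeneral2} by the loop (3)$\Leftrightarrow$(4) and (2)$\Rightarrow$(5)$\Rightarrow$(4); both additions rest on classical locally convex duality and need no new probabilistic input beyond what the theorem already supplies.

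First I would dispatch (3)$\Leftrightarrow$(4) as a purely formal observation. Attainment of the supremum in the dual representation at some $Z\in\Xs^\sim_n$ is literally the Fenchel--Young equality $\varphi(X)+\varphi^*(Z)=\EB[XZ]$, which, by the definition (\ref{eq:ConjPsiUsual}) of $\varphi^*$, is in turn equivalent to $Z$ being a subgradient of $\varphi$ at $X$ relative to the pairing $\langle\Xs,\Xs^\sim_n\rangle$. Since the Fatou identity (\ref{eq:FatouEquality1}) is in force, the equivalence is immediate.

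Next, (5)$\Rightarrow$(4) I would obtain from the standard Hahn--Banach separation of the epigraph: a finite-valued convex function on a locally convex space that is continuous at every point is subdifferentiable at every point, and because $\tau(\Xs,\Xs^\sim_n)$ and $\sigma(\Xs,\Xs^\sim_n)$ share the same topological dual $\Xs^\sim_n$, any such subgradient automatically lies in $\Xs^\sim_n$.

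The substantive direction is (2)$\Rightarrow$(5), which I would handle via the Moreau theorem on Mackey continuity of the biconjugate. Since $\varphi=\varphi^{**}$ by (\ref{eq:FatouEquality1}), it is enough to show that $\varphi^{**}$ is bounded above on some $\tau(\Xs,\Xs^\sim_n)$-neighborhood of $0$. The natural candidate is the polar $L_c^\circ$, where $L_c:=\{Z\in\Xs^\sim_n:\varphi^*(Z)\leq c\}$ is $\sigma(\Xs^\sim_n,\Xs)$-compact by (2); on this polar one controls $\sup_{Z\in L_c}\EB[XZ]$ directly by the polar inequality, while for $Z\notin L_c$ the penalty $-\varphi^*(Z)$ dominates $\EB[XZ]$ after a suitable rescaling. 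Upper boundedness of a finite convex function on a neighborhood yields Mackey continuity everywhere.

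The hard part will be the tail control in (2)$\Rightarrow$(5): ensuring that $\EB[XZ]-\varphi^*(Z)$ stays uniformly bounded above as $Z$ leaves every compact $L_c$. Because $\Xs$ need not be barrelled or metrizable, one cannot simply invoke equicontinuity of norm-bounded families; instead one must exploit the $\sigma$-compactness of the full nested family $(L_c)_{c>0}$ simultaneously---which is precisely what Moreau's theorem in the locally convex setting delivers, and which I would cite rather than reprove.
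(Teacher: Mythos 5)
Your proposal is correct and follows essentially the same route as the paper: the equivalence (3)\,$\Leftrightarrow$\,(4) is the same Fenchel--Young paraphrase, and the link to Mackey continuity rests on the same classical theorem of Moreau \citep{MR0160093} that the paper cites for (2)\,$\Leftrightarrow$\,(5). The only (harmless) structural difference is that you close the loop via (2)\,$\Rightarrow$\,(5)\,$\Rightarrow$\,(4), using Hahn--Banach plus Mackey--Arens for the second arrow, instead of invoking the full biconditional (2)\,$\Leftrightarrow$\,(5) directly; and you rightly identify that the tail estimate in (2)\,$\Rightarrow$\,(5) is exactly the nontrivial content one delegates to Moreau.
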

\begin{proof}
  (3) $\Leftrightarrow$ (4) is just a paraphrasing since $\bar
  Z\in\Xs^\sim_n$ maximizes $Z\mapsto \EB[XZ]-\varphi^*(Z)$ on
  $\Xs^\sim_n$ if and only if $\EB[X\bar Z]-\varphi(X)
  =\varphi^*(Z_X)=\sup_{Y\in\Xs}(\EB[Y\bar Z]-\varphi(Y))$ by the
  definition of $\varphi^*$. Also, (2) $\Leftrightarrow$ (5) is true
  for any finite convex function on a vector space forming a dual pair
  with another vector space, which is lower semicontinuous w.r.t. a
  topology consistent with the duality (e.g.  \citep{MR0160093},
  Propositions~1 and 2). 
\end{proof}

\begin{remark}[On subdifferentiability]
  The message of (4) is that the subdifferential of $\varphi$ contains
  a $\sigma$-additive element (rather than it is non-empty).  Consider
  the case where $\Xs$ is given a completely metrizable topology
  $\tau$ for which $(\Xs,\tau)$ is a \emph{locally convex Fréchet
    lattice} (w.r.t. the same a.s. pointwise order). Then the
  $\tau$-dual of $\Xs$ has the direct sum decomposition
  $\Xs^*=\Xs^\sim_n\oplus\Xs^\sim_s$ where $\Xs^\sim_s$ is the set of
  \emph{singular} linear functionals on $\Xs$ (see
  \citep[Ch.~8]{aliprantis_border06} for detail). In this case, an
  \emph{extended Namioka-Klee theorem} of \citep{MR2648595} asserts
  that any \emph{finite} monotone convex function $\varphi$ on $\Xs$
  is automatically $\tau$-continuous and subdifferentiable in the
  usual sense: for any $X\in \Xs$, there exists a $\nu\in \Xs^*$ such
  that
  \begin{equation}
    \label{eq:Subdiff3}
    \nu(X)-\varphi(X)\geq\nu(Y)-\varphi(Y),\,\forall Y\in\Xs.
  \end{equation}
  See \citep[Theorem~1]{MR2648595} (see also
  \citep[][Proposition~3.1]{MR2254417}). Thus when $\Xs$ is a locally
  convex Fréchet lattice, Corollary~\ref{cor:KnownEquivalences} tells
  us that such $\nu$ can be taken to be order continuous
  ($\Leftrightarrow$ $\sigma$-additive in our setting) if and only if
  $\varphi$ has the Lebesgue property (the sufficiency is already
  obtained by \citep[Lemma~7]{MR2648595}).
\end{remark}

As the Lebesgue property (\ref{eq:LebX2}) is the continuity w.r.t. the
order structure, the order \emph{lower semicontinuity} is often called
the \emph{Fatou property} and characterized by
\begin{equation}
  \label{eq:FatouX2}
  \exists Y\in \Xs\text{ such that }
  |X_n|\leq |Y|,\,\forall n\text{ and }X_n\rightarrow X\text{ a.s. }
  \Rightarrow\,\varphi(X)\leq\liminf_n\varphi(X_n).
\end{equation}
Clearly, (\ref{eq:LebX2}) implies (\ref{eq:FatouX2}), and the latter
is closely related to the $\sigma(\Xs,\Xs^\sim_n)$-lower
semicontinuity ($\Leftrightarrow$ (\ref{eq:FatouEquality1})) assumed
in Theorem~\ref{thm:JSTGeneral2}. In fact, (\ref{eq:FatouEquality1})
$\Rightarrow$ (\ref{eq:FatouX2}) is always true (see
\citep[][Proposition~1]{MR2648595}).  If the converse was also true,
Theorem~\ref{thm:JSTGeneral2} would provide us an even nicer
interpretation of the Lebesgue property as the Fatou property (easy to
check) plus ``something extra'', with the ``extra'' being precisely
specified. See
\citep[Theorem~3.9]{owari13:_maxim_lebes_exten_monot_convex_funct} for
more discussion, where a characterization of Lebesgue property in the
form of Theorem~\ref{thm:JSTGeneral2} is obtained under solely the
Fatou property as the a priori assumption, but with the conjugate
$(\varphi|_{L^\infty})^*$ of the restriction to $L^\infty$ of
$\varphi$ instead of $\varphi^*$.  The implication (\ref{eq:FatouX2})
$\Rightarrow$ (\ref{eq:FatouEquality1}) for proper convex functions is
indeed true for some good spaces $\Xs$, but its validity in the
generality of this paper is still open (to us).

\begin{remark}
  The above question regarding the Fatou and
  $\sigma(\Xs,\Xs^\sim_n)$-lower semicontinuity is equivalent to
  asking if all \emph{order closed convex} sets in $\Xs$ are
  $\sigma(\Xs,\Xs^\sim_n)$-closed.  In \cite[Lemma~6 and
  Corollary~4]{MR2648595}, it is claimed that this is true whenever
  $\Xs$ is (lattice homomorphic to) an ideal of $L^1$, but the proof
  given there has an error. Adapting to our notation, they argued that
  as $\Xs$ being an ideal of $L^1$, $\Xs^\sim_n$ contains $L^\infty$,
  so the $\sigma(\Xs,\Xs^\sim_n)$-convergence of a net
  $(X_\alpha)_\alpha$ in $\Xs$ to $X$ implies the
  $\sigma(L^1,L^\infty)$-convergence to the same limit \emph{as a net
    in $L^1$}, and consequently there exist a sequence of indices
  $(\alpha_n)_n$ as well as $Y_n \in
  \mathrm{conv}(X_{\alpha_n},X_{\alpha_{n+1}},\ldots)$ such that
  $Y_n\rightarrow X$ \emph{in order in $L^1$}. From this they
  concluded that $Y_n\rightarrow X$ in order in $\Xs$, which would
  prove the desired implication for all \emph{convex sets} in
  $\Xs$. The error lies in the last part. More specifically, the order
  convergence of a sequence in an ideal $\Xs$ of $L^0$ is equivalent
  to the \emph{dominated a.s. convergence} (i.e., $X_n\rightarrow X$
  a.s. and $\exists Z\in \Xs_+$ with $|X_n|\leq Z$ a.s. for all $n$);
  the a.s. convergence is common to all ideals of $L^0$, while being
  dominated \emph{by an element of $\Xs$} is specific to each $\Xs$,
  so $Y_n\rightarrow X$ in order as a sequence in $L^1$ need not imply
  the order convergence in $\Xs$.

  % The equivalence between the Fatou property (\ref{eq:FatouX2}) and
  % the ``good'' dual representation (\ref{eq:FatouEquality1}) by
  % order-continuous linear functionals is recently studied in
  % \citep{MR2648595}. In their Lemma~6 and Corollary~4, it is claimed
  % that any locally convex Fréchet lattice $(\Xs,\tau)$ injected into
  % $L^1$ by a $\tau$-continuous lattice morphism (which is true for
  % any
  % Orlicz spaces) has the following property (\emph{C-property}): for
  % any net $(X_\alpha)_\alpha$ converging to some $X\in \Xs$ w.r.t.
  % $\sigma(\Xs,\Xs^\sim_n)$, there exists a sequence of indices
  % $(\alpha_n)_n$ as well as a sequence $(Y_n)_n$ such that
  % $Y_n\in\mathrm{conv}(X_{\alpha_n},X_{\alpha_{n+1}},\cdots)$ and
  % $Y_n\stackrel{o}\rightarrow X$, and consequently
  % (\ref{eq:FatouX2})
  % implies (\ref{eq:FatouEquality1}) for such $\Xs$.  However, the
  % proof given there is not correct. More specifically, given a
  % $\sigma(\Xs,\Xs^\sim_n)$-convergent net $(X_\alpha)_\alpha$, they
  % succeeded to construct a sequence $(X_{\alpha_n})_n$ as well as a
  % sequence $Y_n\in \mathrm{conv}(X_{\alpha_n},X_{\alpha_{n+1}},...)$
  % which converges \emph{almost surely} to the same limit $X$. Then
  % if
  % this $(Y_n)_n$ was dominated by some $\bar Y\in\Xs$, we could
  % conclude that $Y_n\rightarrow X$ \emph{in order}, but the latter
  % may
  % not be guaranteed solely by the above assumption on $\Xs$.
\end{remark}

\subsection{Examples and Related Literature}
\label{sec:Comments}

A monotone \emph{decreasing} convex function $\rho:\Xs\rightarrow
(-\infty,\infty]$ is called a \emph{convex risk measure} if it
satisfies $\rho(X+c)=\rho(X)-c$ for all constants $c$ (see
\citep{MR2779313} for financial motivation and use of this
notion). Making a change of sign, $\varphi(X)=\rho(-X)$ is a monotone
(increasing) convex function with
\begin{equation}
  \label{eq:CashAdd}
  \varphi(X+c)=\varphi(X)+c,\,X\in \Xs,\,c\in\RB.
\end{equation}
% In this note, we call such $\varphi$ a convex risk function.
For this type functions, which we call \emph{convex risk functions},
\citep[Theorem~4.1]{kratschmer07} obtained a similar result regarding
the equivalence between (1) and (3), and the characterization in the
form of Theorem~\ref{thm:JSTGeneral2} including the weak compactness
(2) has been studied for some special solid spaces $\Xs$ as briefly
reviewed below. 

% The characterization of the Lebesgue property in the form of
% Theorem~\ref{thm:JSTGeneral2} has been studied especially in the
% context of convex risk measures. A monotone \emph{decreasing} convex
% function $\rho:\Xs\rightarrow (-\infty,\infty]$ is called a
% \emph{convex risk measure} if it satisfies $\rho(X+c)=\rho(X)-c$
% whenever $c$ is a constant (cash-additive), then making a change of
% sign, $\varphi(X)=\rho(-X)$ is a monotone (increasing) convex function
% with
% \begin{equation}
%   \label{eq:CashAdd}
%   \varphi(X+c)=\varphi(X)+c,\,X\in \Xs,\,c\in\RB.
% \end{equation}
% In this note, we call such a monotone convex function a \emph{convex
%   risk function}. See \citep{MR2779313} for financial motivation and
% more information for the notion of convex risk measures.

%  We say that a monotone convex function
% $\varphi:\Xs\rightarrow(-\infty,\infty]$ is \emph{cash-additive} if

% Then a monotone \emph{decreasing} function $\rho:\Xs\rightarrow
% (-\infty,\infty]$ is called a \emph{convex risk measure} if
% $\varphi_\rho(X):=\rho(-X)$ is a monotone (increasing) function with
% (\ref{eq:CashAdd}), and we call $\varphi_\rho$ here the convex risk
% function.

\subsubsection{$L^\infty$}
\label{sec:Linfty}

When $\Xs=L^\infty$, then $(L^\infty)^\sim_n=L^1$, thus
$\sigma(L^\infty,(L^\infty)^\sim_n)=\sigma(L^\infty,L^1)$ is the weak*
topology. Then on the one hand, a convex set $C\subset L^\infty$ is
$\sigma(L^\infty,L^1)$-closed if and only if $\{X\in C:\,
\|X\|_\infty\leq a\}$ is $L^0$-closed for all $a>0$ by the
Krein-Šmulian theorem, and on the other hand, the Fatou property
(\ref{eq:FatouX2}) is equivalent to
\begin{equation}
  \label{eq:FatouLinfty}
  \sup_n    \|X_n\|_\infty<\infty,\,X_n\rightarrow X\text{ a.s. }
  \Rightarrow\,  \varphi(X)\leq\liminf_n\varphi(X_n).
\end{equation}
Consequently, we have (\ref{eq:FatouEquality1}) $\Leftrightarrow$
(\ref{eq:FatouX2}).

In the case of $L^\infty$, the equivalence of (1) -- (3) is first
obtained by \citep{MR2011534} for \emph{sublinear expectation} (or
equivalently \emph{coherent} risk measures), i.e., for
\emph{positively homogeneous} monotone convex functions $\varphi$ with
(\ref{eq:CashAdd}). The case of convex risk functions is then proved
by \citep{jouini06:_law_fatou} with an additional assumption that
$L^1$ is separable, and the latter assumption is later removed by
\citep{MR2648597} by a homogenization trick.  As a crucial (but
trivial) feature of the space $L^\infty$, (\ref{eq:CashAdd}) and the
monotonicity already imply that $\varphi$ is finite everywhere. Thus
Theorem~\ref{thm:JSTGeneral2} slightly generalizes the previous
results just mentioned, and this generalization is crucial in the
implication (3) $\Rightarrow$ (2) (while not essential for other
implication), where a consequence of (\ref{eq:CashAdd}) that all the
level sets $\{Z\in L^1:\,\varphi^*(Z)\leq c\}$ are bounded in norm was
used to invoke (the proof of) James's sup-theorem.

Finally, we mention a remarkable fact, due to \citep{MR2509290}, that
the Lebesgue property on $L^\infty$ of a convex risk function is
necessary for the function to have a finite extension to some solid
rearrangement invariant space $\Xs\subset L^0$ properly containing
$L^\infty$.

\subsubsection{Orlicz Spaces and their Morse Subspaces (Orlicz hearts)}
\label{sec:Orlicz}

Let $\Phi:[0,\infty)\rightarrow [0,\infty]$ be a Young function
(increasing left-continuous convex function finite on a neighborhood
of $0$ with $\Phi(0)=0$ and $\lim_{x\rightarrow\infty}\Phi(x)=\infty$)
and let
\begin{align*}
  L^\Phi&:=\{X\in L^0:\, \exists
  \lambda>0,\,\EB[\Phi(\lambda|X|)]<\infty\}
  \quad \text{(Orlicz space)},\\
  M^\Phi&:=\{X\in L^0:\,\forall \lambda>0,\,
  \EB[\Phi(\lambda|X|)]<\infty\}\quad \text{(Morse subspace)}.
\end{align*}
Then both $L^\Phi$ and $M^\Phi$ are solid spaces, $L^\infty\subset
L^\Phi\subset L^1$ and $(L^\Phi)^\sim_n=L^{\Phi^*}$ in general, where
$\Phi^*(y):=\sup_{x\geq 0}(xy-\Phi(x))$ is the conjugate Young
function. 

If $\Phi$ is finite-valued (otherwise $M^\Phi=\{0\}$)), then $M^\Phi$
contains $L^\infty$ and $(M^\Phi)^\sim_n=(M^\Phi)^*=L^{\Phi^*}$ as
well. Thus in this case,
$\sigma(M^\Phi,(M^\Phi)^\sim_n)=\sigma(M^\Phi,L^{\Phi^*})$ is the weak
topology, and consequently (\ref{eq:FatouEquality1}) for
$\Xs=M^{\Phi}$ is equivalent to the norm-lower semicontinuity.  Any
norm convergent sequence in $M^\Phi$ admits a subsequence which is
dominated by some element of $M^\Phi$ and a.s. convergent to the same
limit, which shows the implication (\ref{eq:FatouX2}) $\Rightarrow$
(\ref{eq:FatouEquality1}). In fact, the Luxemburg norm
$\|\cdot\|_\Phi$ (see \citep{rao_ren91} for definition) is
order-continuous on $M^\Phi$ (though not on $L^\Phi$). Consequently,
the extended Namioka-Klee theorem tells us that any finite monotone
convex function $\varphi$ on $M^\Phi$ automatically has the Lebesgue
property. The equivalence of finiteness and (2), (3) in this case
(with (\ref{eq:CashAdd})) is contained in \citep{MR2509268} in a
slightly different form, where it is also shown that the finiteness of
a convex risk function is equivalent to $\mathrm{int}\,\dom
\varphi\neq \emptyset$. See also \citep{MR2507760} for the case
$\Xs=L^p=M^{\Phi_p}$ ($\Phi(x)=x^p/p$, $1\leq p<\infty$).

Recall that the Young function $\Phi$ is said to satisfy the
$\Delta_2$-condition if there exist constants $C>0$ and $x_0>0$ such
that $\Phi(2x)\leq C\Phi(x)$ for $x\geq x_0$.  In this case,
$L^\Phi=M^\Phi$ (the converse is also true if $(\Omega,\FC,\PB)$ is
atomless). Consequently,
$(L^\Phi)^\sim_n=(L^\Phi)^*% =(M^\Phi)^\sim_n=(M^\Phi)^*
=L^{\Phi^*}$, thus the
argument of previous paragraph still applies.

When $\Xs=L^\Phi$ with $\Phi^*$ finite,
\citep{orihuela_ruiz12:_lebes_orlic} recently obtained the same
equivalence \emph{but under a stronger a priori assumption that
  $\varphi$ is $\sigma(L^\Phi,M^{\Phi^*})$-lower
  semicontinuous}. Though the latter assumption is not so
unreasonable,
% We can show that the latter condition (with finite $\Phi^*$) is
% equivalent to
% \begin{equation}
%   \label{eq:FatouNorm}
%   \sup_n\|X_n\|_\Phi<\infty,\, X_n\rightarrow X\text{ a.s. }\Rightarrow \varphi(X)\leq \liminf_n\varphi(X_n).
% \end{equation}
% This seems rather reasonable, but
it excludes some trivial cases unless $L^{\Phi^*}=M^{\Phi^*}$: For any
$Z\in L^{\Phi^*}_+\setminus M^{\Phi^*}$, $\varphi(X)=\EB[XZ]$ is a
monotone convex $\sigma(L^\Phi,L^{\Phi^*})$-lsc function on $L^\Phi$,
but is not $\sigma(L^\Phi,M^{\Phi^*})$-continuous (thus not lsc by
linearity) because
$(L^\Phi,\sigma(L^\Phi,M^{\Phi^*}))^*=M^{\Phi^*}$. Note that this
$\varphi$ has the Lebesgue property; if $|X_n|\leq Y\in L^\Phi$ and
$X_n\rightarrow X$ a.s., the dominated convergence theorem applies to
the sequence $(X_nZ)_n$ dominated by $YZ\in L^1$.  Recall that (with
$\Phi^*$ finite), $(M^{\Phi^*})^*=L^\Phi$, hence
$\sigma(M^{\Phi^*},L^\Phi)=\sigma(M^{\Phi^*},(M^{\Phi^*})^*)$
(\emph{the} weak topology). This was needed in
\citep{orihuela_ruiz12:_lebes_orlic} to invoke their \emph{perturbed
  James's sup-theorem} in the proof of (3) $\Rightarrow$ (2). In fact,
our proof of Theorem~\ref{thm:JSTGeneral2} use a similar version of
James's theorem \emph{but not to $\Xs^\sim_n$}. We reduce the problem
to $\sigma(L^1,L^\infty)$ by a simple ``change of variable'' trick,
which allows us to prove the implication for arbitrary solid space
$\Xs$ containing $L^\infty$ and contained in $L^1(\QB)$ with some
$\QB\sim\PB$ under the weaker assumption
(\ref{eq:FatouEquality1}). The equivalence of
(\ref{eq:FatouEquality1}) and the Fatou property (\ref{eq:FatouX2}) in
this level of generality is still open (to us).

\section{Proof of Theorem~\ref{thm:JSTGeneral2}}
\label{sec:ProofMain}

\emph{In the sequel, all the assumptions of
  Theorem~\ref{thm:JSTGeneral2} are in force without further
  mentioning.}  Recall that $\Xs$ is an order-complete
(Dedekind-complete) Riesz space with the countable-sup property as an
ideal of $L^0$, whose order-continuous dual space is identified with
$\Xs^\sim_n$ given by (\ref{eq:OderContiDual}), and we have
$\Xs^\sim_n=\Xs^\sim_c$ (the $\sigma$-order-continuous dual) in the
notation of \citep{aliprantis_Burkinshaw03} as a consequence of the
countable-sup property. We shall use the following characterization
of $\sigma(\Xs^\sim_n,\Xs)$-compact sets.

\begin{lemma}%[\citep{aliprantis_Burkinshaw03}, Theorem~6.21]
  \label{lem:Compact1}
  Let $C\subset \Xs^\sim_n$. Then the following are equivalent:
  \begin{enumerate}
  \item $C$ is $\sigma(\Xs^\sim_n,\Xs)$-relatively compact,
  \item $X_n\downarrow 0$ in $\Xs$ implies $\sup_{Z\in
      C}\EB[X_n|Z|]\downarrow 0$.
  \item $\rho_C(X):= \sup_{Z\in C}\EB[|XZ|]$ is order-continuous,
    i.e., $X_n\in Y\in\Xs$, $|X_n|\leq |Y|$ a.s. for all $n$ and
    $X_n\rightarrow X$ a.s. imply $\rho_C(X)=\lim\rho_C(X_n)$.
  \item the set $\{XZ:\, Z\in C\}$ is uniformly integrable for all
    $X\in \Xs$.
  \end{enumerate}

\end{lemma}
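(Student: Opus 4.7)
The plan is to obtain (2) $\Leftrightarrow$ (3) $\Leftrightarrow$ (4) by elementary arguments and to derive (1) $\Leftrightarrow$ (2) from the classical Nakano--Grothendieck characterization of $\sigma(\Xs^\sim_n,\Xs)$-relatively compact sets in the order-continuous dual of a Dedekind-complete Riesz space. Before starting, I would record that $\rho_C$ is sublinear with $\rho_C(X)=\rho_C(|X|)$, and that under (2) finite-valuedness of $\rho_C$ is automatic: for $X\in\Xs_+$, the sequence $(1/n)X\downarrow 0$ in $\Xs$ forces $(1/n)\rho_C(X)\downarrow 0$, hence $\rho_C(X)<\infty$.

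For (2) $\Leftrightarrow$ (3) the implication (3) $\Rightarrow$ (2) is immediate by restricting to decreasing sequences. For (2) $\Rightarrow$ (3), given $X_n\to X$ a.s.\ with $|X_n|\le |Y|\in\Xs$, I set $\xi_n:=\sup_{m\ge n}|X_m-X|$; then $0\le\xi_n\le 2|Y|\in\Xs$ and $\xi_n\downarrow 0$ a.s., so (2) gives $\rho_C(\xi_n)\downarrow 0$, and subadditivity yields $|\rho_C(X_n)-\rho_C(X)|\le\rho_C(X_n-X)\le\rho_C(\xi_n)\to 0$. For (2) $\Leftrightarrow$ (4): in (2) $\Rightarrow$ (4), $L^1$-boundedness of $\{XZ:Z\in C\}$ is $\rho_C(X)<\infty$, while uniform absolute continuity is proved by contradiction---if $\PB(A_n)\to 0$ but $\sup_{Z\in C}\EB[|XZ|\mathbf{1}_{A_n}]\ge\varepsilon$, pass to a subsequence with $\PB(A_n)\le 2^{-n}$ and set $B_n:=\bigcup_{m\ge n}A_m$; then $|X|\mathbf{1}_{B_n}\downarrow 0$ in $\Xs$ contradicts (2). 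Conversely, for (4) $\Rightarrow$ (2), given $X_n\downarrow 0$ I apply (4) with $X=X_1$: for any $t>0$, $X_n\le t+X_1\mathbf{1}_{\{X_n>t\}}$ yields $\EB[X_n|Z|]\le t\,\rho_C(1)+\EB[X_1|Z|\mathbf{1}_{\{X_n>t\}}]$, and since $\PB(X_n>t)\downarrow 0$ the uniform integrability of $\{X_1Z:Z\in C\}$ makes the second term uniformly small in $Z$ for large $n$, while $t$ may be chosen arbitrarily small (and $\rho_C(1)<\infty$ by (4) with $X=1$).

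The remaining equivalence (1) $\Leftrightarrow$ (2) is the Nakano--Grothendieck criterion that a subset of the order-continuous dual $\Xs^\sim_n$ of a Dedekind-complete Riesz space is $\sigma(\Xs^\sim_n,\Xs)$-relatively compact precisely when it is equi-order-continuous at zero; the countable-sup property of $\Xs$ allows the net version of the condition to be replaced by the sequential one in (2). I would invoke this from a standard reference such as Aliprantis--Burkinshaw, \emph{Positive Operators}. This is the main obstacle: it is the only step requiring a nontrivial theorem, the rest being routine measure theory. One might hope to bypass it via the change-of-variable $Z\mapsto X_0Z$ (with some $X_0\in\Xs_+$, $X_0>0$ a.s.)\ and the Dunford--Pettis theorem in $L^1$, but this route only delivers $\sigma(L^1,L^\infty)$-relative compactness of the image, and lifting this back to $\sigma(\Xs^\sim_n,\Xs)$-compactness of $C$ is not automatic because the latter topology is strictly finer than the restriction of $\sigma(L^1,L^\infty)$ to $\Xs^\sim_n$.
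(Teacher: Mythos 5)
Your proposal is correct, and for the two nontrivial pivots it partly coincides with and partly improves on the paper's argument. The identification of (1) $\Leftrightarrow$ (2) with the abstract compactness criterion for subsets of the order-continuous dual is exactly what the paper does (it cites Corollary~6.29 and Theorem~6.21 of Aliprantis--Burkinshaw, \emph{Locally Solid Riesz Spaces}, using order-completeness to pass to the convex solid hull), and your (2) $\Rightarrow$ (3) via $\xi_n=\sup_{m\ge n}|X_m-X|\le 2|Y|$ is identical to the paper's. Where you genuinely diverge is in the remaining cycle. The paper proves (3) $\Rightarrow$ (4) $\Rightarrow$ (2), and its (4) $\Rightarrow$ (2) step leans on the standing assumption $\Xs\subset L^1(\QB)$ for some $\QB\sim\PB$: it splits $\EB[X_n|Z|]$ along the event $\{|Z/\Zh|>N\}$ with $\Zh=d\QB/d\PB$ and uses $\QB$-uniform integrability of $\{1/\Zh\}$. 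Your truncation $X_n\le t+X_1\ind_{\{X_n>t\}}$ together with $\PB(X_n>t)\downarrow 0$ and the uniform integrability of $\{X_1Z:Z\in C\}$ achieves the same implication with no auxiliary measure at all, so that step becomes valid for any solid $\Xs$ containing the constants; this is a genuine simplification. Your (2) $\Rightarrow$ (4) is also slightly more careful than the paper's (3) $\Rightarrow$ (4): the paper asserts that $\PB(A_n)\to 0$ gives $X\ind_{A_n}\to 0$ a.s., which is not literally true without passing to a subsequence, whereas your contradiction argument with $B_n=\bigcup_{m\ge n}A_m$ and $\PB(A_m)\le 2^{-m}$ supplies exactly the Borel--Cantelli repair. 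Finally, your closing remark is on point: the change-of-variable/Dunford--Pettis route cannot substitute for the Riesz-space compactness theorem here, since $\sigma(\Xs^\sim_n,\Xs)$ is finer than the trace of $\sigma(L^1,L^\infty)$; the paper reserves that trick for the implication (3) $\Rightarrow$ (2) of the main theorem, not for this lemma.
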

\begin{proof}
  Since $\Xs$ is order-complete, $C\subset \Xs^\sim_n=\Xs^\sim_c$ is
  $\sigma(\Xs^\sim_n,\Xs)$-relatively compact if and only if the
  convex solid hull of $C$ is relatively compact for the same topology
  by \citep[][Corollary~6.29]{aliprantis_Burkinshaw03}, and the latter
  is equivalent to (2) by
  \citep[][Theorem~6.21]{aliprantis_Burkinshaw03}.  To prove (2)
  $\Rightarrow$ (3), let a sequence $(X_n)_n$ in $\Xs$ be dominated by
  $Y\in \Xs$ and converge a.s. to $X$. Then $\bar X_n:=\sup_{l\geq
    n}|X-X_l|\leq 2Y$, hence $\bar X_n\in \Xs$, $\bar X_n\downarrow
  0$, thus $|\rho_C(X)-\rho_C(X_n)|\leq \sup_{Z\in C}\EB[\bar
  X_n|Z|]\downarrow 0$.  For (3) $\Rightarrow$ (4), pick an arbitrary
  $X\in\Xs$. Then for any sequence $(A_n)\subset \FC$ with
  $\PB(A_n)\rightarrow 0$, we have $|X\ind_{A_n}|\leq |X|\in\Xs$
  and $X\ind_{A_n}\rightarrow 0$ a.s. Thus (3) shows that
  $\sup_{Z\in C}\EB[|XZ|\ind_{A_n}]=\sup_{Z\in
    C}\EB[|X\ind_{A_n}||Z|]\downarrow 0$, hence $\{XZ:\, Z\in C\}$
  is uniformly integrable for all $X\in \Xs$.

  (4) $\Rightarrow$ (2). Take $\QB\sim\PB$ such that $\Xs\subset
  L^1(\QB)$ and put $\Zh:=d\QB/d\PB$. Then note that $\PB(\Zh>0)=1$
  and $\sup_{Z\in C}\QB(Z/\Zh>N)\leq \sup_{Z\in
    C}\EB[|Z|]/N\rightarrow 0$ ($N\rightarrow\infty$) since $C$ is
  bounded in $L^1$ by taking $X=1\in\Xs$ in (4).  Since the one-point
  set $\{1/\Zh\}=\{d\PB/d\QB\}$ is $\QB$-uniformly integrable, we
  deduce that
  \begin{equation}
    \label{eq:ProofLemmaUI1}
    \sup_{Z\in C}\PB(Z/\Zh>N)=\sup_{Z\in C}\EB_\QB[(1/\Zh)\ind_{\{Z/\Zh>N\}}]
    \rightarrow 0.  
  \end{equation}
  Now suppose $ X_n\downarrow 0$ in $\Xs$.  Then $\EB[X_n|Z|]\leq
  \EB[X_n|Z|\ind_{\{|Z/\Zh|>N\}}]+\EB[X_n|Z|\ind_{\{|Z/\Zh|\leq
    N\}}] \leq\linebreak \EB[X_1|Z|\ind_{\{|Z/\Zh|>N\}}]+ N\EB_\QB[X_n]$ for
  all $Z\in C$, hence
  \begin{align*}
    \sup_{Z\in C}\EB[X_n|Z|]% &\leq
    &\leq \sup_{Z\in C}\EB[X_1|Z|\ind_{\{|Z/\Zh|>N\}}]+
    N\EB_\QB[X_n].
  \end{align*}
  The first term in the RHS tends to $0$ by the uniform integrability
  of $\{X_1Z:\, Z\in C\}$ and (\ref{eq:ProofLemmaUI1}), while for each
  $N$ fixed, the second term tends to $0$ as $n\rightarrow \infty$ by
  the dominated convergence theorem since $0\leq X_n\leq X_1\in
  L^1(\QB)$. 
\end{proof}

The next preparatory result is a version of James's theorem obtained
by \citep{orihuelaRuizGalan12:_james} which we shall use in the proof
of (3) $\Rightarrow$ (2).
\begin{theorem}[\citep{orihuelaRuizGalan12:_james}, Theorem~2]
  \label{thm:James}
  Let $E$ be a real Banach space and
  $f:E\rightarrow\RB\cup\{+\infty\}$ be a function which is not
  identically $+\infty$ and is coercive, i.e.,
  \begin{equation}
    \label{eq:Coercive}
    \lim_{\|x\|\rightarrow\infty}\frac{f(x)}{\|x\|}=+\infty.
  \end{equation}
  Then if the supremum $\sup_{x\in E}(\langle x,x^*\rangle-f(x))$ is
  attained for every $x^*\in E^*$, the level set $\{x\in E:\, f(x)\leq
  c\}$ is relatively weakly compact for each $c\in\RB$.
\end{theorem}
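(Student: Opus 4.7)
The plan is to reduce Theorem~\ref{thm:James} to the classical James theorem by means of a Lagrangian/scaling argument applied to the attainment hypothesis.

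First, coercivity of $f$ immediately yields that each sublevel set $A_c:=\{x\in E:\,f(x)\leq c\}$ is norm-bounded: a sequence $(x_n)$ with $\|x_n\|\to\infty$ and $f(x_n)\leq c$ would satisfy $f(x_n)/\|x_n\|\to 0$, contradicting (\ref{eq:Coercive}). Let $K_c$ denote the norm-closed convex hull of $A_c$; it is weakly closed, bounded, convex, and contains $A_c$. Thus it suffices to show that $K_c$ is weakly compact, and by the classical James theorem for bounded weakly closed convex sets this amounts to proving that every $x^*\in E^*$ attains its supremum on $K_c$. Since $\sigma(x^*):=\sup_{x\in K_c}\langle x,x^*\rangle=\sup_{x\in A_c}\langle x,x^*\rangle$, the task reduces to finding, for each $x^*\in E^*$, some $\bar x\in K_c$ with $\langle\bar x,x^*\rangle=\sigma(x^*)$.

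To construct $\bar x$, I would exploit the attainment hypothesis applied to the scaled functionals $tx^*$ as $t\to\infty$. For each $t>0$ let $x_t\in E$ achieve the supremum defining $f^*(tx^*)$; testing against $x\in A_c$ with $\langle x,x^*\rangle\geq\sigma(x^*)-\varepsilon$ yields
\begin{equation*}
  t\langle x_t,x^*\rangle-f(x_t)\;\geq\;t\sigma(x^*)-c-t\varepsilon,
\end{equation*}
i.e.\ $f(x_t)\leq t\bigl(\langle x_t,x^*\rangle-\sigma(x^*)+\varepsilon\bigr)+c$. I expect this inequality, together with coercivity, to force $\langle x_t,x^*\rangle$ to approach $\sigma(x^*)$ and $x_t$ to remain (up to arbitrarily small enlargement) in the bounded set $A_c$ as $t\to\infty$ and $\varepsilon\downarrow 0$, so that any weak cluster point of $(x_t)$ lies in $K_c$ and attains $\sigma(x^*)$.

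The main obstacle is that $f$ is not assumed convex or lower semicontinuous, so a priori $x_t$ can lie anywhere in $E$---in particular $\langle x_t,x^*\rangle$ might exceed $\sigma(x^*)$ and weak cluster points might escape $K_c$. I would circumvent this by first reducing to the convex lsc case via the biconjugate $f^{**}$: since $f^*=f^{***}$, coercivity of $f$ (equivalent to $f^*$ being bounded on bounded subsets of $E^*$, using that the attainment hypothesis makes $f^*$ everywhere finite and hence $f$ bounded below) transfers to $f^{**}$; the attainment hypothesis is preserved because $f^{**}\leq f$ forces any maximizer $\bar x$ of $\langle\cdot,x^*\rangle-f$ to satisfy $f^{**}(\bar x)=f(\bar x)$; and $\{f\leq c\}\subset\{f^{**}\leq c\}$, so weak compactness of the larger level set suffices. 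An alternative route, adopted in \citep{orihuelaRuizGalan12:_james}, is to replace the classical James theorem by Simons' sup--limsup theorem and produce approximate maximizers of $\langle\cdot,x^*\rangle$ inside $K_c$ directly, via convex combinations of $(x_t)$, thereby sidestepping the lack of convexity altogether.
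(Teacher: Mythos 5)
You are not actually being compared against a proof in the paper here: the paper does not prove this statement at all --- it is imported verbatim as Theorem~2 of \citep{orihuelaRuizGalan12:_james} and used as a black box in the proof of (3) $\Rightarrow$ (2) --- so your argument has to stand on its own. Its preliminary reductions do stand: coercivity bounds each level set $A_c$; attainment forces $f^*<+\infty$ everywhere, hence $f$ is bounded below; and the passage to the biconjugate is correct, since $(f^{**})^*=f^*$ together with $f^{**}\leq f$ shows that any maximizer of $\langle \cdot,x^*\rangle-f$ also maximizes $\langle\cdot,x^*\rangle-f^{**}$, coercivity transfers via the boundedness of $f^*$ on bounded sets, and $\{f\leq c\}\subset\{f^{**}\leq c\}$. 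Reducing to $f$ convex, lower semicontinuous and coercive is a genuinely useful normalization.

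The core step, however, fails twice. First, the scaling heuristic is not merely unproven but false: take $E=\RB$, $f(x)=x^2$, $x^*=1$, so that $\sigma(x^*)=\sqrt{c}$, while the maximizer of $tx-f(x)$ is $x_t=t/2$; then $\langle x_t,x^*\rangle\rightarrow+\infty$ and $x_t$ leaves every enlargement of $A_c$ (the theorem is of course trivial in $\RB$ --- the example refutes the mechanism, not the statement). Exact maximizers for $tx^*$ sit on the level set selected by the optimality condition $tx^*\in\partial f(x_t)$, whose level grows with $t$; no scaling pins them to the fixed level $c$, since the dual scaling $s$ minimizing $s^{-1}\bigl(f^*(sx^*)+c\bigr)$, which would match that level, need not be attained, and even when it is, a maximizer for $sx^*$ need not satisfy $f=c$. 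Second, ``any weak cluster point of $(x_t)$'' is circular: in a nonreflexive Banach space a bounded net has weak$^*$ cluster points only in $E^{**}$, and their existence in $E$ is exactly the weak compactness being proven --- this is precisely why no soft limiting argument can yield a James-type theorem. Once these two steps are deleted, what remains to be shown is that attainment of all the $f$-perturbed suprema implies James's criterion on $K_c$, which is the entire content of the theorem; your closing sentence delegates this to ``Simons' sup--limsup theorem \dots\ convex combinations of $(x_t)$'', which is a pointer to the proof in \citep{orihuelaRuizGalan12:_james} (a Pryce--Simons-type construction applied directly to the perturbed functionals), not an argument. So the proposal contains a correct and worthwhile reduction to the convex case, but no proof of the theorem.
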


\begin{lemma}
  \label{lem:Estim1}
  Suppose $\varphi(0)=0$, then for any $\beta\in\RB$, $X\in \Xs$ and
  $Z\in \Xs^\sim_n$,
  \begin{equation}
    \label{eq:Estim1}
    \EB[XZ]-\varphi^*(Z)\geq-\beta\,\Rightarrow\, \varphi^*(Z)\leq 2\beta+2\varphi(2|X|).
  \end{equation}

\end{lemma}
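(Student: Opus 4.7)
The strategy is to exploit the definition of $\varphi^*$ as a supremum by probing it with the carefully chosen test element $2|X|$, and then combine the resulting lower bound with the hypothesis in a self-absorbing fashion.

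First I would reduce to the case $Z\geq 0$ a.s. Monotonicity of $\varphi$ together with $\varphi(0)=0$ forces $\varphi^*(Z)=+\infty$ whenever $\PB(Z<0)>0$: testing $\varphi^*(Z)$ against $Y_n:=-n\ind_{\{Z<0\}}\in L^\infty\subset \Xs$ yields $\varphi(Y_n)\leq\varphi(0)=0$ and hence $\EB[Y_nZ]-\varphi(Y_n)\geq n\,\EB[|Z|\ind_{\{Z<0\}}]\to+\infty$. Since the hypothesis $\EB[XZ]-\varphi^*(Z)\geq-\beta$ can only be useful when $\varphi^*(Z)<\infty$ (otherwise the conclusion is vacuous), we may assume $Z\geq 0$ a.s.

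Next, I would record two inequalities. Rewriting the hypothesis gives
\[
\varphi^*(Z)\leq \EB[XZ]+\beta\leq \EB[|X|Z]+\beta,
\]
where the second step uses $Z\geq 0$. On the other hand, since $2|X|\in\Xs$ by solidness, the very definition of $\varphi^*$ applied at $Y=2|X|$ yields
\[
\varphi^*(Z)\geq \EB[2|X|Z]-\varphi(2|X|)=2\EB[|X|Z]-\varphi(2|X|),
\]
equivalently $\EB[|X|Z]\leq\tfrac12(\varphi^*(Z)+\varphi(2|X|))$.

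Plugging the second inequality into the first gives
\[
\varphi^*(Z)\leq\tfrac12\varphi^*(Z)+\tfrac12\varphi(2|X|)+\beta,
\]
and rearranging produces $\varphi^*(Z)\leq 2\beta+\varphi(2|X|)$. Finally, monotonicity and $\varphi(0)=0$ give $\varphi(2|X|)\geq 0$, so $\varphi(2|X|)\leq 2\varphi(2|X|)$ and the claimed bound $\varphi^*(Z)\leq 2\beta+2\varphi(2|X|)$ follows. The only nontrivial insight, and hence the ``main obstacle'', is spotting that the right test point in $\varphi^*$'s definition is precisely $2|X|$, which creates a factor of $2$ in front of $\EB[|X|Z]$ that can be absorbed back into $\varphi^*(Z)$.
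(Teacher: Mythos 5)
Your proof is correct, and it takes a genuinely (if mildly) different route from the paper's. The paper first observes that $0=\varphi(0)=\inf_{Z\in\Xs^\sim_n}\varphi^*(Z)$ --- a fact that relies on the dual representation (\ref{eq:FatouEquality1}), hence on the standing lower semicontinuity assumption --- picks a near-minimizer $Z_\varepsilon$ with $\varphi^*(Z_\varepsilon)<\varepsilon$, and applies Young's inequality to the pair $\bigl(2|X|,\tfrac{Z+Z_\varepsilon}{2}\bigr)$ together with convexity of $\varphi^*$ to get $\EB[XZ]\leq\varphi(2|X|)+\varphi^*(Z)/2$. You instead bound $\EB[|X|Z]$ directly from the definition of $\varphi^*$ evaluated at the test point $2|X|$, which dispenses with $Z_\varepsilon$ entirely: your argument uses only monotonicity, $\varphi(0)=0$, and the definition of $\varphi^*$, so it does not need the lower semicontinuity hypothesis at all, and it even yields the marginally sharper bound $\varphi^*(Z)\leq 2\beta+\varphi(2|X|)$ before you relax it to match the statement. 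Both proofs share the same key idea --- probing at $2|X|$ so that the resulting $\varphi^*(Z)/2$ can be absorbed --- and both reductions ($Z\geq 0$ on $\dom\varphi^*$ via testing against $-n\ind_{\{Z<0\}}$, and finiteness of $\varphi^*(Z)$ to justify the rearrangement) are handled correctly in your write-up; the paper states the first as a parenthetical consequence of monotonicity and leaves the second implicit. In short: a clean, slightly more elementary and more self-contained argument.
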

\begin{proof}
  Since $0=\varphi(0)=\inf_{Z\in\Xs^\sim_n}\varphi^*(Z)$, there
  exists, for any $\varepsilon>0$, some
  $Z_\varepsilon\in(\Xs^\sim_{n})_+$ with
  $\varphi^*(Z_\varepsilon)<\varepsilon$ (by the monotonicity of
  $\varphi$, $\dom\varphi^*\subset (\Xs^\sim_n)_+$). Thus
  \begin{align*}
    \EB[XZ]\leq \EB\left[2|X|\frac{Z+Z_\varepsilon}2\right]\leq
    \varphi(2|X|)+\frac{\varphi^*(Z)+\varphi^*(Z_\varepsilon)}2 \leq
    \varphi(2|X|)+\frac{\varphi^*(Z)}{2}+\varepsilon
  \end{align*}
  for any $\varepsilon$, hence $\EB[XZ]\leq
  \varphi(2|X|)+\varphi^*(Z)/2$. Consequently,
  $\EB[XZ]-\varphi^*(Z)\geq -\beta$ implies $\varphi^*(Z)\leq
  \beta+\EB[XZ]\leq \beta+\varphi(2|X|)+\varphi^*(Z)/2$. Rearranging
  the terms, we have $\varphi^*(Z)\leq 2\beta+2\varphi(2|X|)$.
\end{proof}

\begin{proof}[Proof of Theorem~\ref{thm:JSTGeneral2}] %
  We suppose without loss of generality that $\varphi(0)=0$ and we
  write $\Lambda_c:=\{Z\in \Xs^\sim_n:\, \varphi^*(Z)\leq c\}$
  throughout this proof. Note that $\Lambda_c$ is
  $\sigma(\Xs^\sim_n,\Xs)$-closed for each $c>0$ since $\varphi^*$ is
  lower semicontinuous for the same topology. Also, the monotonicity
  of $\varphi$ implies that $\Lambda_c\subset \dom\varphi^*\subset
  L^1_+$.

  (1) $\Rightarrow$ (2). By the above comments, it suffices to show
  that for each $c>0$, $\Lambda_c$ is
  $\sigma(\Xs^\sim_n,\Xs)$-\emph{relatively} compact, which is
  equivalent (in view of Lemma~\ref{lem:Compact1}) to saying
  that $\sup_{Z\in\Lambda_c}\EB[X_n|Z|]\linebreak\downarrow 0$ if $\Xs\ni
  X_n\downarrow 0$. Given such a sequence $(X_n)_n$, observe
  that  $\sup_{Z\in \Lambda_c}\EB[X_nZ]\leq\linebreak\frac1\lambda
  \sup_{Z\in \Lambda_c}\left(\varphi(\lambda
    X_n)+\varphi^*(Z)\right)\leq \frac1\lambda \varphi(\lambda
  X_n)+\frac{c}\lambda$ by Young's inequality. Then the Lebesgue
  property of $\varphi$ implies that $\varphi(\lambda X_n)\downarrow
  0$ for every $\lambda>0$ since $X_n\downarrow 0$ (so $\lambda
  X_n\downarrow 0$). Hence a diagonal argument yields the desired
  property.

  (2) $\Rightarrow$ (3).  Note that $Z\mapsto \EB[XZ]-\varphi^*(Z)$ is
  $\sigma(\Xs^\sim_n,\Xs)$-upper semicontinuous (since $\varphi^*$ is
  lower semicontinuous). Thus, the set $\Gamma_X:=\{Z\in\Xs^\sim_n:\,
  \EB[XZ]-\varphi^*(Z)\geq \varphi(X)-1\}$ is
  $\sigma(\Xs^\sim_n,\Xs)$-closed, and it is contained in the
  $\sigma(\Xs^\sim_n,\Xs)$-compact set
  $\Lambda_{2-2\varphi(X)+2\varphi(2|X|)}$ by Lemma~\ref{lem:Estim1},
  hence $\Gamma_X$ itself is $\sigma(\Xs^\sim_n,\Xs)$-compact. Now (3)
  is clear since any upper semicontinuous function on a compact set
  attains its maximum.

  (2) $\Rightarrow$ (1).  Let $(X_n)_n$ be such that $|X_n|\leq Y\in
  \Xs_+$ and $X_n\rightarrow X$ a.s., then $|X|\leq Y$ as well, and
  $\varphi(X_n), \varphi(X)\geq \varphi(-Y)$ by the monotonicity. Thus
  given (\ref{eq:FatouEquality1}), only those $Z\in\Xs^\sim_n$ with
  $\EB[X_nZ]-\varphi^*(Z)\geq \varphi(-Y)-1$ contribute to the
  supremum $\sup_{Z\in\Xs^\sim_n}(\EB[XZ]-\varphi^*(Z))$ (and
  the same is true for $X$). Applying Lemma~\ref{lem:Estim1} and using
  the notation of the previous paragraph, any such $Z\in\Xs^\sim_n$ is
  contained in $\Lambda_{c_Y}$ where
  $c_Y:=2-2\varphi(-Y)+2\varphi(2Y)$. Consequently,
  $\varphi(X_n)=\sup_{Z\in\Lambda_{c_Y}}(\EB[X_nZ]-\varphi^*(Z))$,
  $\varphi(X)=\sup_{Z\in\Lambda_{c_Y}}(\EB[XZ]-\varphi^*(Z))$, thus
  applying twice the elementary inequality $\sup_{x\in
    A}f(x)-\sup_{x\in A}g(x)\leq \sup_{x\in A}(f(x)-g(x))$
  \citep[e.g.,][p.~356, Prop.~12]{MR979294},
  \begin{align*}
    |\varphi(X)-\varphi(X_n)|
    &\leq \sup_{Z\in\Lambda_{c_Y}}\left|\EB[XZ]-\varphi^*(Z)-(\EB[X_nZ]-\varphi^*(Z))\right|\\
    &\leq \sup_{Z\in\Lambda_{c_Y}}\EB[|X_n-X|Z]\rightarrow 0.
  \end{align*}
  by the $\sigma(\Xs^\sim,\Xs)$-compactness of $\Lambda_{c_Y}$ and
  Lemma~\ref{lem:Compact1}.

  (3) $\Rightarrow$ (2). We shall prove that for any $X\in\Xs$ and
  $c>0$, the set $\{(1+|X|)Z\in L^1:\,
  Z\in\Xs^\sim_n,\,\varphi^*(Z)\leq c\}$ is uniformly integrable,
  which in view of Lemma~\ref{lem:Compact1} shows the desired
  compactness. Let us define a function $g_X:L^1\rightarrow
  \RB\cup\{+\infty\}$ by
  \begin{align*}
    g_X(Z):=
    \begin{cases} \varphi^*\left(\frac{Z}{1+|X|}\right)
      &\text{ if }\frac{Z}{1+|X|}\in \Xs^\sim_n,\\
      +\infty&\text{ otherwise},
    \end{cases}
    \quad\forall Z\in L^1.
  \end{align*}
  Then for any $c>0$, $g_X(Z)\leq c$ iff $Z=(1+|X|)Z'$,
  $Z'\in\Xs^\sim_n$ and $\varphi^*(Z')\leq c$, and since $\Xs$ is an
  ideal containing $L^\infty$, $\lambda(1+|X|)\mathrm{sgn}(Z)\in
  \Xs$. Hence for any $Z\in\dom g_X$,
  \begin{align*}
    g_X(Z)&\geq
    \EB\left[\frac{\lambda(1+|X|)\mathrm{sgn}(Z)}{1+|X|}Z\right]-\varphi(\lambda(1+|X|)\mathrm{sgn}(Z))\\
    &\geq \lambda \|Z\|_{L^1}-\varphi(-\lambda(1+|X|)).
  \end{align*}
  Since $\varphi$ is finite on $\Xs$, this shows that $g_X$ is a
  coercive function on $L^1$.

  Let $X\in \Xs$, $Y\in L^\infty$ and $Z_{X,Y}\in \Xs^\sim_n$ be a
  maximizer in (3) for $Y(1+|X|)$ which belongs to $\Xs$ by the
  solidness (since $Y(1+|X|)\leq \|Y\|_\infty(1+|X|)\in\Xs$). Then
  \begin{align*}
    \varphi(Y(1+|X|))&=
    \EB[Y(1+|X|)Z_{X,Y}]-\varphi^*(Z_{X,Y})\\
    &=\EB[Y(1+|X|)Z_{X,Y}]-g_X((1+|X|)Z_{X,Y}).
  \end{align*}
 On the other hand, for $Z'\in L^1$, if
  $Z'\in\dom g_X$,
  \begin{align*}
    \EB[YZ']-g_X(Z')=\EB\left[Y(1+|X|)\frac{Z'}{1+|X|}\right]-\varphi^*\left(\frac{Z'}{1+|X|}\right)\leq
    \varphi((1+|X|)Y),
  \end{align*}
  while if $Z'\in L^1\setminus \dom g_X$, then obviously
  $\EB[YZ']-g_X(Z')=-\infty\leq \varphi((1+|X|)Y)$. We have thus shown
  that the supremum $\sup_{Z\in L^1}(\EB[YZ]-g_X(Z))$ is attained by
  $(1+|X|)Z_{X,Y}$ for all $Y\in L^\infty$. Consequently,
  Theorem~\ref{thm:James} shows that $\{Z'\in L^1:\, g_X(Z')\leq
  c\}=\{(1+|X|)Z:\, Z\in \Xs^\sim_n,\,\varphi^*(Z)\leq c\}$ is
  $\sigma(L^1,L^\infty)$-relatively compact, hence uniformly
  integrable by the Dunford-Pettis theorem. 
\end{proof}

\section*{Acknowledgements}

The author thanks Sara Biagini for helpful comments on an earlier
version of the paper. He also thanks two anonymous referees for
careful reading. The financial support of the Center for Advanced
Research in Finance (CARF) at the Graduate School of Economics of the
University of Tokyo is gratefully acknowledged.

\small

\bibliographystyle{OwariRev}
\bibliography{main}
\end{document}